\theoremstyle{plain}
\newtheorem*{thm*}{Theorem}
\newtheorem*{cor*}{Corollary}
\newtheorem*{defn*}{Definition}
\newtheorem*{claim*}{Claim}
\newtheorem{theorem}{Theorem}[section]
\newtheorem{corollary}[theorem]{Corollary}
\newtheorem{lemma}[theorem]{Lemma}
\newtheorem{proposition}[theorem]{Proposition}
\newtheorem{example}[theorem]{Example}
\newtheorem{asumption}[theorem]{Assumption}
\newtheorem{definition}[theorem]{Definition}
\newtheorem{remark}[theorem]{Remark}
\theoremstyle{definition}
\theoremstyle{remark}
\def\Hom{\mathrm{Hom}}
\def\Ext{\mathrm{Ext}}
\def\End{\mathrm{End}}
\DeclareMathOperator{\reg}{reg}
\def\m{\mathfrak m}
\def\card{\mathrm{Card}}
\def\depth{\mathrm{depth}}
\def\Supp{\mathrm{Supp}}
\def\m{\mathfrak m}
\def\fz{\mathbb{Z}}
\def\fn{\mathbb{N}}
\begin{document}

\setlength{\baselineskip}{14.5pt}
\title{Castelnuovo-Mumford regularity  and Segre-Veronese transform}
\author{ Marcel
Morales}
\address{Universit\'e de Grenoble I, Institut Fourier, 
UMR 5582, B.P.74,
38402 Saint-Martin D'H\`eres Cedex,
and IUFM de Lyon, 5 rue Anselme, 69317 Lyon Cedex (FRANCE)}

\email{ morales@ujf-grenoble.fr}
\author{Nguyen Thi Dung}
\address{Thai Nguyen University of Agriculture and Forestry, Thai Nguyen, Vietnam}
\email{xsdung050764@gmail.com}

\thanks{2010 {\em Mathematics Subject Classification:} Primary: 13D40, Secondary 14M25, 13C14, 14M05.}
\thanks{{\em Key words and phrases:} Segre-Veronese, Castelnuovo-Mumford regularity, Cohen-Macaulay, postulation number.}
\thanks{  This work was supported partially  by VIASM, Hanoi, Vietnam.}

\begin{abstract}
In this paper we give a nice formula for the  Castelnuovo-Mumford regularity of the Segre product of modules, under some suitable hypotheses.
This extends recent results   of David A. Cox, and Evgeny Materov (2009).  

\end{abstract}

\maketitle
\tableofcontents

 \section{Introduction}Segre and Veronese embeddings of projective variety plays a key role in Algebraic Geometry.
 From the algebraic point of view $K$-algebras and their
 free resolutions are important fields of research. One of the important numerical invariants of $S$-modules is the Castelnuovo-Mumford regularity. 
Several approaches to study the
  Castelnuovo-Mumford regularity of Segre Veronese embeddings was given. The first one was done in  \cite{bm}. 
S.Goto and K. Watanabe have studied the local cohomology modules of Veronese and Segre transform of graded modules. 
Motivated by the paper of David A. Cox, and Evgeny Materov (2009), 
where is computed the Castelnuovo-Mumford regularity of the Segre Veronese embedding,
we  extend  their result and compute the  Castelnuovo-Mumford regularity of the Segre product of modules under
 an hypothesis on the persistence of the local cohomology modules; note that this hypothesis is true for Cohen-Macaulay modules. 
 In  \cite{gw} it is given a formula for the local cohomology of a Segre product of modules   with $\depth \geq 2.$ We can easily extends it to modules 
  with $\depth \geq 1.$
By definition
the Castelnuovo-Mumford regularity is the maximum of the "real" regularities of some Segre product of local cohomology modules. In section 2, we define 
the "virtual" regularities and we prove that even if 
the "real" regularities  and the "virtual" regularities are different, 
taking the maximum over the set of all "virtual" regularities gives the Castelnuovo-Mumford regularity. This allows us to give nice formulas for 
Castelnuovo-Mumford regularity of the Segre product of Cohen-Macaulay modules. 
Some cases of the above results were obtained by studying Hilbert-Poincaré series in \cite{md0}.
In section 3, we study when our hypothesis are true for Segre products of Stanley Reisner rings.

\section{Segre transform, Local cohomology and Castelnuovo-Mumford regularity }

Let $S$ be a polynomial ring over a field $K$, in a finite number of variables. We suppose that $S$ is graded by the standard graduation $S=\oplus_{i\geqslant 0}S_i$.
 Let $\m=\oplus_{i\geqslant 1}S_i$ be the maximal irrelevant ideal. Let $M$ be a finitely generated graded $S$-module, $M=\oplus_{l\geqslant \sigma}M_l,$ with $\sigma\in \fz$ and $M_{\sigma}\ne 0.$ 
\begin{remark}We choose to take as base ring   polynomials rings, because of the paper \cite{gw}, but in fact by using \cite{bs},
 all our results will be true over standard Noetherian graded rings $R= R_0[R_1]$, where $R_0$ is a local ring with infinite residue field. 
We can assume without loss of generality that the field $K$ is infinite.
 
 \end{remark}
Let $M$ be a finitely generated graded $S$-module, the local cohomology modules are  graded, so we can define 
 $\End(H^i_{\m}(M))=\max\{ \beta\in \fz \mid (H^i_{\m}(M))_{\beta}\ne 0\},$ $ r_j(M )=\End (H^j_{\m }(M ))+j $ and
 $ \reg(M )=\max r_j(M )$ the Castelnuovo-Mumford regularity of $M$.


Let $S_1, S_2$ be two polynomial rings on two disjoint sets of variables, for $i=1,2$, $M_i$ be  a graded $S_i$-module. The Segre product
 $M_1{\underline{\otimes}} M_2$ is defined by $\oplus _{n\in \fz} (M_1)_n\otimes (M_2)_n$. Note that $M_1{\underline{\otimes}} M_2$ is 
a $S_1{\underline{\otimes}} S_2$-module. 
 By using Küneth formula for global cohomology (see \cite{gw}[Proposition (4.1.5)and Remark 4.1.7],  \cite{sv}[Section 0.2]),
    we can extend \cite{gw}[Proposition (4.1.5)]:
\begin{theorem}\label{t11}
Let $S_1, S_2$ be two polynomial rings on two disjoint sets of variables, for $i=1,2$, $M_i$ be a finitely generated  graded $S_i$-module. 
Let $\m$ be the  maximal irrelevant ideal of $S_1{\underline{\otimes}} S_2$. Assume that $\dim M_1\geq 1,\dim M_2\geq 1$, and
  $\depth M_i\geqslant \min(2,\dim M_i)$ for  $i=1,2.$ Then for all integers $j,k,l\geq 1$
$$ H^j_{\m}(M_1{\underline{\otimes}} M_2)\simeq (M_1{\underline{\otimes}} H^j_{\m_2}(M_2))\oplus (H^j_{\m_1}(M_1){\underline{\otimes}} M_2)
\underset{{k,l\ {\rm s.t.} \  j=k+l-1}}\oplus (H^k_{\m_1}(M_1){\underline{\otimes}} H^l_{\m_2}(M_2)). $$

\end{theorem}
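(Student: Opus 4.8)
The plan is to transport the entire computation to sheaf (global) cohomology on the Segre product $X:=\mathrm{Proj}(S_1\underline{\otimes}S_2)$, which under the Segre embedding is $X_1\times X_2$ with $X_i=\mathrm{Proj}\,S_i$, to apply the Künneth formula there, and then to translate back to local cohomology. Writing $W=M_1\underline{\otimes}M_2$ and $\widetilde{W}=\widetilde{M_1}\boxtimes\widetilde{M_2}$, the key geometric input is that the Segre very ample sheaf is $\mathcal O_{X_1}(n)\boxtimes\mathcal O_{X_2}(n)$ in degree $n$, so that $H^{i}(X,\widetilde{W}(n))\cong\bigoplus_{a+b=i}H^{a}(X_1,\widetilde{M_1}(n))\otimes_K H^{b}(X_2,\widetilde{M_2}(n))$ for every $n$; this is the Künneth formula for global cohomology quoted from \cite{gw} and \cite{sv}, and it is a genuine direct sum because $K$ is a field. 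I would set $\Gamma_*(\widetilde{M_i})=\bigoplus_n H^0(X_i,\widetilde{M_i}(n))$ and record the Serre dictionary $H^{a}(X_i,\widetilde{M_i}(n))\cong H^{a+1}_{\m_i}(M_i)_n$ for $a\geq 1$, together with the four term sequence relating $M_i$, $\Gamma_*(\widetilde{M_i})$, $H^0_{\m_i}(M_i)$ and $H^1_{\m_i}(M_i)$.

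For $j\geq 2$ the Serre correspondence gives $H^{j}_{\m}(W)_n\cong H^{j-1}(X,\widetilde{W}(n))$, and I would feed this into the Künneth decomposition. Separating the summand $a=0$, the summand $b=0$, and the summands with $a,b\geq 1$, and then applying $H^{a}(X_i,\widetilde{M_i}(n))\cong H^{a+1}_{\m_i}(M_i)_n$ (legitimate since $a\geq 1$ there), these three groups become, after summing over $n$, the pieces $\Gamma_*(\widetilde{M_1})\underline{\otimes}H^{j}_{\m_2}(M_2)$, then $H^{j}_{\m_1}(M_1)\underline{\otimes}\Gamma_*(\widetilde{M_2})$, and finally $\bigoplus_{k+l=j+1,\;k,l\geq 2}H^{k}_{\m_1}(M_1)\underline{\otimes}H^{l}_{\m_2}(M_2)$. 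The case $j=1$ I would treat separately from the fundamental sequence $0\to H^0_{\m}(W)\to W\to\Gamma_*(\widetilde{W})\to H^1_{\m}(W)\to 0$ together with the Künneth identity $\Gamma_*(\widetilde{W})=\Gamma_*(\widetilde{M_1})\underline{\otimes}\Gamma_*(\widetilde{M_2})$ on degree zero sections.

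It then remains to replace each saturation $\Gamma_*(\widetilde{M_i})$ by $M_i$ and to enlarge the range of the last sum from $k,l\geq 2$ to $k,l\geq 1$, and this is exactly where the hypothesis $\depth M_i\geq\min(2,\dim M_i)$ enters. It forces $H^0_{\m_i}(M_i)=0$, so the fundamental sequence collapses to $0\to M_i\to\Gamma_*(\widetilde{M_i})\to H^1_{\m_i}(M_i)\to 0$, which further degenerates to an equality $M_i=\Gamma_*(\widetilde{M_i})$ as soon as $\dim M_i\geq 2$ (then $H^1_{\m_i}(M_i)=0$ too). Since Segre product against a fixed module is exact over the field $K$, tensoring this short exact sequence is meant to produce precisely the missing $k=1$ and $l=1$ contributions. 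The main obstacle is this last matching in the residual case $\dim M_i=1$, where $M_i$ is not saturated: one must check that the Segre tensor of the fundamental sequence splits, so that $\Gamma_*(\widetilde{M_i})\underline{\otimes}(-)$ decomposes as the predicted direct sum and not merely as an extension. I expect this to be the delicate point and would isolate it as a separate lemma, exploiting the exactness of $-\underline{\otimes}(-)$ over $K$ and tracking the grading to control the relevant $\Ext^1$ over the Segre ring; when $\depth M_i\geq 2$ for both $i$ this point is vacuous and the isomorphism of \cite{gw} is recovered verbatim.
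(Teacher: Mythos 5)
Your proposal follows essentially the same route as the paper: the case $\dim M_1,\dim M_2\geq 2$ is exactly the Goto--Watanabe result \cite{gw}[Proposition (4.1.5)] that the paper cites (whose proof is precisely the K\"unneth formula for sheaf cohomology on $X_1\times X_2$ plus the Serre correspondence that you spell out), and your treatment of the residual case $\dim M_i=1$ --- the sequence $0\to M_i\to\Gamma_*(\widetilde{M_i})\to H^1_{\m_i}(M_i)\to 0$ Segre-tensored to supply the missing $k=1$ and $l=1$ contributions --- is the paper's own argument with $M_1^0=\Gamma_*(\widetilde{M_1})$. The splitting issue you rightly flag as the delicate point is not resolved in the paper either (its displayed short exact sequences only exhibit $H^1_{\m}(M_1\underline{\otimes}M_2)$ as an iterated extension of the three summands), but this is harmless for every use made of the theorem, since for a short exact sequence of graded modules the $\End$ of the middle term equals the maximum of the $\End$s of the outer terms, which is all that the regularity computations require.
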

\begin{proof} The case $\dim M_1,\dim M_2\geq 2$ is \cite{gw}[Proposition (4.1.5)]. Suppose that $M_1$ is  Cohen-Macaulay of dimension 1
 and $\depth M_2\geqslant 1$.
 From \cite{gw}[Remark 4.1.7] we get two exact sequences (for the notations we refer to \cite{gw}):
 $$0 \longrightarrow  M_1\longrightarrow M_1^0\longrightarrow H^1_{\m_1}(M_1)\longrightarrow, 0$$
 $$0\longrightarrow H^1_{\m_1}(M_1){\underline{\otimes}} M_2\longrightarrow  H^1_{\m}(M_1{\underline{\otimes}} M_2)\longrightarrow 
 M_1^0{\underline{\otimes}} H^1_{\m_2}(M_2)\longrightarrow,  0$$
 from the first one we get the exact sequence:
 $$0 \longrightarrow  M_1{\underline{\otimes}} H^1_{\m_2}(M_2)\longrightarrow M_1^0{\underline{\otimes}} H^1_{\m_2}(M_2)
\longrightarrow H^1_{\m_1}(M_1){\underline{\otimes}} H^1_{\m_2}(M_2)\longrightarrow, 0$$
Our claim follows from these two exact sequences.
\end{proof} 
Note that if  $M_1, M_2$ are Cohen-Macaulay modules of dimension 1, then $M_1{\underline{\otimes}} M_2$ is a Cohen-Macaulay module of dimension 1, and 
$ \reg(M_1{\underline{\otimes}} M_2 )=\max (\reg(M_1),\reg(M_2 ))$.

\begin{definition}\label{d1} From now on, we consider $s$-polynomial rings (with disjoint set of variables) $S_1,\ldots, S_s$, graded, with irrelevant ideals $\m_1,\ldots, \m_s$. For $i=1,\ldots s,$ let $M_i$ be 
a finitely generated  graded $S_i$-module such that $\dim M_i\geq 1$ and $\depth M_i\geqslant \min(2,\dim M_i).$ 
(Without loss of generality we can assume that at most one module has dimension one).
We set the following notations:
\begin{itemize}\item $ d_i=\dim M_i;\ \ \sigma_i=\min\{l\in \fz: (M_i)_l\ne 0\};$\\  $r_{i,j}:=\End(H^j_{\m_i}(M_i))+j, $
\item $ C:=\{0,\ldots,d_1\}\times \{0,\ldots,d_2\}\times\ldots\times \{0,\ldots,d_s\}, $
\item for $u\in  C, $  $ \Supp\ u=\{i\in \{1,\ldots,s\}\mid u_i\ne 0 \} ,$ 
\item  $\displaystyle E_{i,j}=\begin{cases}H^j_{\m_i}(M_i) \ \text{\ if\ } j>0\\ M_i\ \ \ \ \ \text{\ if\ } j=0\end{cases}$;\\
 for  $u\in  C,$ $E_u=\underset{i=1}{\overset{s}{\underline{\otimes}} }E_{i,u_i}, $ 
$\tilde E_u=\underset{i\in \Supp u}{\underline{\otimes} }E_{i,u_i}. $
\end{itemize}
\end{definition} 
We can state the following corollary of Theorem \ref{t11}.
\begin{corollary} \label{c12} With the  notations introduced in \ref{d1},  for $j\geqslant 1$, we have
$$ H^j_{\m}(M_1{\underline{\otimes}}\ldots{\underline{\otimes}} M_s)\simeq \underset{u\in  C,s.t.\\  {\displaystyle \sum_{l\in \Supp u}}(u_l-1)=j-1}{\oplus} E_u, $$
where   $\m$ is the irrelevant maximal ideal of $S_1{\underline{\otimes}} \ldots{\underline{\otimes}} S_s.$ 

Hence we have that 
$$\reg(M_1{\underline{\otimes}}\ldots{\underline{\otimes}} M_s)= \underset{ u\in C}\max   \{ 1+\sum_{l\in \Supp u}(u_l-1)+\End(E_u) \}.$$
\end{corollary}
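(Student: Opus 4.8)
The plan is to induct on the number $s$ of factors, with Theorem \ref{t11} serving as the two–factor engine. The base case $s=1$ is the tautology $H^j_{\m_1}(M_1)=E_{(j)}$, recorded by the single index $u=(j)$, which satisfies $u_1-1=j-1$. For the inductive step I would first use the reduction noted in Definition \ref{d1} to arrange the factors so that, if a module of dimension one occurs (at most one does), it is $M_s$; then each of $M_1,\dots,M_{s-1}$ has dimension $\geq 2$ and hence depth $\geq 2$. Set $N=M_1\underline{\otimes}\cdots\underline{\otimes} M_{s-1}$, with irrelevant ideal $\m'$, so that $M_1\underline{\otimes}\cdots\underline{\otimes} M_s=N\underline{\otimes} M_s$.

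Before Theorem \ref{t11} can be applied to $N\underline{\otimes} M_s$ I must check that $N$ meets its hypotheses, and this is the crux of the argument, and the reason the dimension–one factor must be peeled off last. For $s=2$, $N=M_1$ satisfies the hypotheses by assumption. For $s\geq 3$, $N$ is a Segre product of at least two factors each of depth $\geq 2$; since positive depth is preserved under Segre products we have $\depth N\geq 1$, so $H^0_{\m'}(N)=0$, while the inductive decomposition identifies the summands of $H^1_{\m'}(N)$ as the $E_{u'}$ with $\Supp u'\neq\emptyset$ and $\sum_{i\in\Supp u'}(u'_i-1)=0$, forcing $u'_i=1$ for all $i\in\Supp u'$; as each such $H^1_{\m_i}(M_i)$ vanishes, $H^1_{\m'}(N)=0$. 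Hence $\depth N\geq 2=\min(2,\dim N)$, and since the hypotheses on $M_s$ hold by assumption, Theorem \ref{t11} applies.

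I would then substitute the inductive formulas for $H^k_{\m'}(N)$ (with $k=j$, and with $k$ ranging over the pairs $k+l=j+1$) into the three–term decomposition of Theorem \ref{t11}, sorting the summands by writing each multi–index as $u=(u',u_s)$. Three cases arise and match the three terms: (A) if $u_s=0$ then $E_u=E_{u'}\underline{\otimes} M_s$ with $\Supp u=\Supp u'$, reproducing $H^j_{\m'}(N)\underline{\otimes} M_s$; (B) if $u_s\geq 1$ and $u'=0$ then $E_u=N\underline{\otimes} H^{u_s}_{\m_s}(M_s)$ with $u_s=j$, reproducing $N\underline{\otimes} H^j_{\m_s}(M_s)$; and (C) if $u_s\geq 1$ and $u'\neq 0$ then $E_u=E_{u'}\underline{\otimes} H^{u_s}_{\m_s}(M_s)$, and on setting $k=1+\sum_{i\in\Supp u'}(u'_i-1)$ and $l=u_s$ the condition $\sum_{i\in\Supp u}(u_i-1)=j-1$ becomes $k+l=j+1$ with $k,l\geq 1$, reproducing the cross terms $H^k_{\m'}(N)\underline{\otimes} H^l_{\m_s}(M_s)$. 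In each case the defining condition is equivalent to the index condition on the matching term, every admissible $u$ has $\Supp u\neq\emptyset$, and each falls into exactly one case; the empty–support index, which would spuriously reproduce the module itself, never occurs. This yields the asserted decomposition of $H^j_{\m}(M_1\underline{\otimes}\cdots\underline{\otimes} M_s)$.

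The regularity formula then follows formally. Because $\End$ of a finite direct sum is the maximum of the $\End$'s, the decomposition gives $\End\big(H^j_{\m}(M_1\underline{\otimes}\cdots\underline{\otimes} M_s)\big)=\max\{\End(E_u):\Supp u\neq\emptyset,\ \sum_{l\in\Supp u}(u_l-1)=j-1\}$, whence $r_j=j+\End(H^j_{\m}(\cdots))=\max\{1+\sum_{l\in\Supp u}(u_l-1)+\End(E_u)\}$ over the same index set, using $j=1+\sum_{l\in\Supp u}(u_l-1)$. Taking the maximum over $j\geq 1$ (the value $j=0$ contributes nothing, since $\depth\geq 1$ gives $H^0_{\m}=0$) and merging the index sets produces $\reg=\max_{u\in C,\ \Supp u\neq\emptyset}\{1+\sum_{l\in\Supp u}(u_l-1)+\End(E_u)\}$, as stated. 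I expect the only genuine difficulty to be the depth–persistence check in the second paragraph; the remainder is careful bookkeeping of the index set $C$.
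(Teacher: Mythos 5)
Your induction on $s$ via Theorem \ref{t11} is precisely the argument the paper leaves implicit (the corollary is stated without proof), and it is correct: the one point that genuinely needs checking before Theorem \ref{t11} can be iterated --- that the partial Segre product $N=M_1\underline{\otimes}\cdots\underline{\otimes}M_{s-1}$ again has $\depth\geq\min(2,\dim)$ after the dimension-one factor is moved to the last position --- is handled soundly, as is the exclusion of the empty-support index from the sum. Nothing to add.
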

In order to compute $\reg(M_1{\underline{\otimes}}\ldots{\underline{\otimes}} M_s),$ we  have to know 
   when $E_u\not = 0$, and in this case find $\End(E_u)$. In a concrete example, if we know all local cohomology modules, it is possible 
to compute the regularity of the Segre product, but to give a formula in general is   impossible. 
Our purpose is to give a formula in terms of the regularity data $r_{i,u_i}$,  looking for the optimal hypothesis.

We say that a graded $S$-module $N=\oplus_{l\in \fz}N_l,$ not necessarily finitely generated has no gaps if there is no integers $i<j<k$ such that
$$ N_i\ne 0; N_k\ne 0; N_j=0. $$
\begin{example} \label{e9} Let $M$ be a finitely generated graded $S$-module with $\depth M\geqslant 1$, $M=\oplus_{l\geqslant \sigma}M_l,$ 
where $\sigma\in \fz$ and $M_{\sigma}\ne 0.$ 
Since the field $K$ is infinite, 
there exists  $x\in S_1$, a nonzero divisor of $M$. The multiplication  by $x$ defines an injective map $M_i\rightarrow M_{i+1}$, 
hence $M$ has no gaps, and for all $l\geq \sigma $, we have  $M_{l}\ne 0.$
\end{example}\begin{asumption}\label{assumption}{\bf From now on, we assume : for any $i=1,...,s$; 
 $\dim M_i\geq 1$ and $\depth M_i\geq \min (2,\dim M_i)$. For any $ \depth M_i\leq j\leq \dim M_i,$ if $H^{j}_{\m_i}(M_i)\not= 0$ then 
   $H^{j}_{\m_i}(M_i)$ has no gaps, 
and $(H^{j}_{\m_i}(M_i))_k\not=0$ for infinitely many $k$. (Without loss of generality we can assume that at most one module has dimension one).
}
\end{asumption}
Note that our assumption is true if all the modules $M_i$ are Cohen-Macaulay.
We introduce another piece of notation:
\begin{itemize}
\item   For $u\in C,$ $\Gamma_u=1+ \underset {l\in \Supp u}{\sum} (u_l-1)+\End(E_u). $
\item   For $u\in C,$  $ \displaystyle \gamma_u=
1+\underset{l\in \Supp u}{\sum}(u_l-1)+\underset{i\in \Supp u}{\min} \bigl(\End(H^{u_i}_{\m_i}(M_i))\bigl),$
\item   $C_1:= \{  u\in C;  E_u\not = 0 \},$ $C_2:= \{  u\in C;  \tilde E_u\not = 0 \},$

\end{itemize}
The following Lemma follows immediately from the definitions and the Assumption \ref{assumption}. 
\begin{lemma}\label{l13bis} With the  notations introduced in \ref{d1}, and the Assumption \ref{assumption}. Let $\epsilon_1,..., \epsilon_s$ be the canonical basis of $\fz^s$. For any $u\in C$:
\begin{enumerate}
\item $\tilde E_u\not = 0$ if and only if $H^{u_i}_{\m_i}(M_i)\not=0$ for all $i\in \Supp u$.
\item $ \End\tilde E_u=\underset{i\in \Supp u}{\min} (\End(H^{u_i}_{\m_i}(M_i)). $
\item  If $E_u\ne 0$  then $\End E_u=\End\tilde E_u$, that is $\Gamma _u =\gamma _u  .$  If $u$ has full support then $E_u\ne 0$.
\item For any $k\notin \Supp u,$ $\lambda_k\in \fn^*, \lambda_k\leq d_k$, such that $H^{\lambda_k}_{\m_k}(M_k)\not=0$, we have 
$$\gamma_{u+\lambda_k\epsilon_k}=\min(\gamma_u+\lambda_k-1, \End(H^{\lambda_k}_{\m_k}(M_k))+\lambda_k+\underset{l\in \Supp u}{\sum}(u_l-1)).  $$
\end{enumerate}\end{lemma}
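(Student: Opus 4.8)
The plan is to prove each of the four assertions in Lemma~\ref{l13bis} directly from the definitions in \ref{d1} together with Assumption~\ref{assumption}, treating them essentially in the order stated since later parts build on earlier ones. The key technical fact I will use repeatedly is the behaviour of $\End$ under the Segre product: for two graded modules $N,P$ that each have no gaps and are nonzero in infinitely many degrees (equivalently, nonzero in all sufficiently high degrees), the Segre product $N\,\underline{\otimes}\,P=\oplus_n (N_n\otimes P_n)$ satisfies $\End(N\,\underline{\otimes}\,P)=\min(\End N,\End P)$, and is nonzero precisely when both factors are nonzero. This is the engine behind parts (1) and (2), so I would isolate it as the first step.

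For part (1), I would argue that $\tilde E_u=\underline{\otimes}_{i\in\Supp u}H^{u_i}_{\m_i}(M_i)$ is nonzero iff every factor is nonzero. The nontrivial direction uses Assumption~\ref{assumption}: each nonzero factor $H^{u_i}_{\m_i}(M_i)$ has no gaps and is nonzero in infinitely many (hence all sufficiently high) degrees, so the degreewise tensor product $(\tilde E_u)_n=\bigotimes_i (H^{u_i}_{\m_i}(M_i))_n$ is nonzero for all large $n$; conversely if some factor vanishes identically then $\tilde E_u=0$. Part (2) then follows from the same observation by induction on $|\Supp u|$: since all the top degrees $\End(H^{u_i}_{\m_i}(M_i))$ are finite and each factor is nonzero in every high degree, the product is nonzero in degree $n$ for every $n\le \min_i\End(H^{u_i}_{\m_i}(M_i))$ (that are large enough to engage the no-gaps tails) and zero above the minimum, giving $\End\tilde E_u=\min_{i\in\Supp u}\End(H^{u_i}_{\m_i}(M_i))$.

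For part (3), the point is that $E_u$ differs from $\tilde E_u$ only by tensoring in the factors $E_{i,0}=M_i$ for $i\notin\Supp u$; by Example~\ref{e9} each such $M_i$ has $\depth\ge 1$, hence has no gaps and is nonzero in all degrees $\ge\sigma_i$. Applying the first-step lemma shows that tensoring with these full-support factors neither kills $E_u$ (so $E_u\ne 0\iff\tilde E_u\ne0$) nor lowers the top degree, because the $M_i$ are nonzero in all high degrees; thus $\End E_u=\End\tilde E_u$, which unwinds to $\Gamma_u=\gamma_u$. The ``full support'' clause is immediate: if $\Supp u=\{1,\dots,s\}$ then $E_u=\tilde E_u$, which is nonzero by part (1) since $u_i\ge\depth M_i\ge 1$ forces each $H^{u_i}_{\m_i}(M_i)\ne0$ in the relevant range — here I would double-check that $u_i$ lies in the cohomological range where nonvanishing is guaranteed, which is the one place needing care.

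Part (4) is a bookkeeping computation once (2) is in hand. Writing out $\gamma_{u+\lambda_k\epsilon_k}$ from its definition, the support gains the index $k$, the shift term contributes $\sum_{l\in\Supp u}(u_l-1)+(\lambda_k-1)$, and the inner minimum over the enlarged support splits as $\min\bigl(\min_{i\in\Supp u}\End(H^{u_i}_{\m_i}(M_i)),\,\End(H^{\lambda_k}_{\m_k}(M_k))\bigr)$. Distributing the fixed summands $1+\sum_{l\in\Supp u}(u_l-1)+(\lambda_k-1)$ across this minimum and recognizing the first branch as $\gamma_u+(\lambda_k-1)$ and the second as $\End(H^{\lambda_k}_{\m_k}(M_k))+\lambda_k+\sum_{l\in\Supp u}(u_l-1)$ yields the stated formula. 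I expect the main obstacle to be purely the first step — proving the $\End$-of-Segre-product identity cleanly and pinning down exactly why Assumption~\ref{assumption} (no gaps plus infinitely many nonzero degrees) is the right hypothesis to guarantee degreewise nonvanishing of the tensor product in all high degrees; everything after that is formal manipulation.
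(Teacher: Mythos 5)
The paper gives no written proof of this lemma (it is declared to follow ``immediately from the definitions and the Assumption''), so there is nothing to compare line by line; but your argument as written is not sound, because your key technical lemma has the infinite tails pointing in the wrong direction, and this propagates into a false claim in part (3). Since each $M_i$ is finitely generated, every $H^{j}_{\m_i}(M_i)$ is Artinian and vanishes in all sufficiently \emph{high} degrees, which is why $\End$ is finite in the first place. Hence ``nonzero for infinitely many $k$'' in Assumption \ref{assumption} forces nonvanishing in infinitely many \emph{negative} degrees, and combined with ``no gaps'' it gives $(H^{j}_{\m_i}(M_i))_k\neq 0$ for all $k\leq\End(H^{j}_{\m_i}(M_i))$ --- a downward-infinite tail. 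Your assertion that each factor ``is nonzero in all sufficiently high degrees, so the degreewise tensor product is nonzero for all large $n$'' is therefore false. The intended conclusions of (1) and (2) do still come out once the picture is flipped: the Segre product of downward-infinite no-gap modules is nonzero exactly in degrees $\leq$ the minimum of the $\End$'s, which gives both the nonvanishing criterion and $\End\tilde E_u=\min_{i\in\Supp u}\End(H^{u_i}_{\m_i}(M_i))$.

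The real damage is in part (3), where you conclude $E_u\neq 0\iff\tilde E_u\neq 0$. This is false and would collapse the architecture of the paper. The factors $M_j$ for $j\notin\Supp u$ live in degrees $\geq\sigma_j$ (upward tails, by Example \ref{e9}), while $\tilde E_u$ lives in degrees $\leq\End\tilde E_u$ (downward tail); their Segre product is supported exactly on the interval from $\max_{j\notin\Supp u}\sigma_j$ to $\End\tilde E_u$, which is empty precisely when $\End\tilde E_u<\max_{j\notin\Supp u}\sigma_j$. That is the case $E_u=0$ with $\tilde E_u\neq 0$, and it is exactly the case the proof of Theorem \ref{main1} is built to handle --- it is why $C_1$ is in general a proper subset of $C_2$ and why the theorem is not a tautology. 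The lemma claims only the one-sided implication (\emph{if} $E_u\neq 0$ \emph{then} $\End E_u=\End\tilde E_u$), which does follow from the interval description, since when the interval is nonempty its top endpoint is $\End\tilde E_u$. Two smaller points: your justification of the full-support clause via ``$u_i\geq\depth M_i\geq 1$ forces $H^{u_i}_{\m_i}(M_i)\neq 0$'' is also incorrect, as local cohomology can vanish in degrees strictly between depth and dimension; the content of that clause is just that for full-support $u$ one has $E_u=\tilde E_u$, so no additional vanishing beyond that of $\tilde E_u$ can occur. Part (4) is correct as pure bookkeeping from (2).
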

We can state our main result:
\begin{theorem}\label{main1}  With the  notations introduced in \ref{d1}, We have:
$$\reg(M_1{\underline{\otimes}}\ldots{\underline{\otimes}} M_s)\leq \underset{u\in C_2}{\max}
 \{ 1+\underset{l\in \Supp u}{\sum}(u_l-1)+\underset{i\in \Supp u}{\min} \bigl(\End(H^{u_i}_{\m_i}(M_i))\bigl)  \}.$$
Moreover the Assumption \ref{assumption} implies the equality. 
 \end{theorem}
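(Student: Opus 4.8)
The plan is to deduce the inequality almost directly from Corollary~\ref{c12} and Lemma~\ref{l13bis}, and then to prove the reverse inequality under Assumption~\ref{assumption} by an extremal argument on the support of $u$. For the inequality, recall that by Corollary~\ref{c12} the regularity equals $\max\Gamma_u$ taken over those $u$ for which $E_u\neq 0$, that is over $u\in C_1$ (summands with $E_u=0$ simply do not occur). By Lemma~\ref{l13bis}(3) we have $\Gamma_u=\gamma_u$ on $C_1$, and moreover $C_1\subseteq C_2$: if $E_u\neq 0$ then some graded component of the Segre product $\underline{\otimes}_i E_{i,u_i}$ is nonzero, which forces each factor $H^{u_i}_{\m_i}(M_i)$ with $i\in\Supp u$ to be nonzero in that degree, so $\tilde E_u\neq 0$ by Lemma~\ref{l13bis}(1). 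Hence $\reg(M_1\underline{\otimes}\ldots\underline{\otimes} M_s)=\max_{u\in C_1}\gamma_u\leq\max_{u\in C_2}\gamma_u$, as claimed.

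For the equality I would use the Assumption to prove the reverse bound $\max_{C_2}\gamma_u\leq\reg$. Choose $u^\ast\in C_2$ attaining $\max_{C_2}\gamma$ and, among all such maximizers, one whose support $\Supp u^\ast$ is of maximal cardinality; the goal is to show $u^\ast\in C_1$, for then $\max_{C_2}\gamma=\gamma_{u^\ast}=\Gamma_{u^\ast}\leq\reg$ and we are done. Suppose for contradiction that $E_{u^\ast}=0$. Under Assumption~\ref{assumption} each factor $H^{u^\ast_i}_{\m_i}(M_i)$ with $i\in\Supp u^\ast$ is nonzero in every degree $\leq\End H^{u^\ast_i}_{\m_i}(M_i)$ (no gaps, together with nonvanishing in infinitely many --- hence all sufficiently small --- degrees), so $\tilde E_{u^\ast}$ is nonzero exactly in degrees $\leq A:=\End\tilde E_{u^\ast}=\min_{i\in\Supp u^\ast}\End H^{u^\ast_i}_{\m_i}(M_i)$ by Lemma~\ref{l13bis}(2). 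By Example~\ref{e9} each $M_k$ with $k\notin\Supp u^\ast$ is nonzero in all degrees $\geq\sigma_k$. Since $E_{u^\ast}=\tilde E_{u^\ast}\underline{\otimes}\bigl(\underline{\otimes}_{k\notin\Supp u^\ast}M_k\bigr)=0$, there is no degree $n$ with $\max_{k\notin\Supp u^\ast}\sigma_k\leq n\leq A$, and therefore some $k_0\notin\Supp u^\ast$ satisfies $\sigma_{k_0}>A$.

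Now I close the gap by enlarging the support. Since $\reg M_{k_0}\geq\sigma_{k_0}>A$ (the regularity of a module is at least its initial degree, as the module has a minimal generator in degree $\sigma_{k_0}$) and $\depth M_{k_0}\geq 1$, the maximum defining $\reg M_{k_0}$ is attained at some cohomological degree $\lambda_{k_0}\geq 1$; thus $1\leq\lambda_{k_0}\leq d_{k_0}$, $H^{\lambda_{k_0}}_{\m_{k_0}}(M_{k_0})\neq 0$, and $r_{k_0,\lambda_{k_0}}=\reg M_{k_0}\geq A+1$. Put $u'=u^\ast+\lambda_{k_0}\epsilon_{k_0}$. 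Then $u'\in C_2$ by Lemma~\ref{l13bis}(1), and Lemma~\ref{l13bis}(4) gives $\gamma_{u'}=\min\bigl(\gamma_{u^\ast}+\lambda_{k_0}-1,\ r_{k_0,\lambda_{k_0}}+\sum_{l\in\Supp u^\ast}(u^\ast_l-1)\bigr)\geq\gamma_{u^\ast}$, since $\lambda_{k_0}\geq 1$ makes the first argument $\geq\gamma_{u^\ast}$ while $r_{k_0,\lambda_{k_0}}\geq A+1$ makes the second argument $\geq(A+1)+\sum_{l\in\Supp u^\ast}(u^\ast_l-1)=\gamma_{u^\ast}$. As $\gamma_{u^\ast}$ is maximal on $C_2$ we get $\gamma_{u'}=\gamma_{u^\ast}$, but $\Supp u'\supsetneq\Supp u^\ast$, contradicting the maximality of $|\Supp u^\ast|$. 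Hence $u^\ast\in C_1$, which yields the reverse inequality and therefore equality.

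I expect the genuine difficulty to be concentrated in the extension step: when $E_{u^\ast}=0$ one must locate a coordinate $k_0\notin\Supp u^\ast$ whose module carries local cohomology high enough to re-enter $C_1$ without decreasing $\gamma$, i.e. with $r_{k_0,\lambda_{k_0}}\geq A+1$. This is exactly the point at which Assumption~\ref{assumption} is indispensable: the no-gaps and infinitely-many-nonzero-degrees conditions are what pin down the precise range of nonvanishing of $\tilde E_{u^\ast}$ and thus convert the vanishing $E_{u^\ast}=0$ into the usable inequality $\sigma_{k_0}>A$, after which the elementary bound $\reg M_{k_0}\geq\sigma_{k_0}$ produces the required $\lambda_{k_0}$. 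Without the Assumption, $E_{u^\ast}$ could vanish for accidental degree reasons, the support-maximal maximizer need not lie in $C_1$, and only the one-sided inequality survives.
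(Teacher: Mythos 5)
Your proof is correct and follows essentially the same route as the paper's: the one-sided inequality from Corollary \ref{c12} together with $\Gamma_u=\gamma_u$ on $C_1$, and for the equality the same key step of converting $E_{u}=0$ into $\sigma_{k_0}>\End\tilde E_{u}$ for some $k_0$ outside the support, invoking $\sigma_{k_0}\leq\reg M_{k_0}$ to produce a cohomological degree $\lambda_{k_0}\geq 1$ with $r_{k_0,\lambda_{k_0}}\geq\End\tilde E_{u}+1$, and applying Lemma \ref{l13bis}(4). The only (cosmetic) difference is that the paper iterates the augmentation until reaching a nonzero $E_v$, using that the fully augmented vector has $E\neq 0$, whereas you terminate via an extremal choice of maximizer with largest support; both are valid.
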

\begin{proof} Note that 
$$\reg(M_1{\underline{\otimes}}\ldots{\underline{\otimes}} M_s)= \max{(\Gamma_u | E_u\not=0 )}\leq \max{(\gamma_u | \tilde E_u\not=0 )}, $$
Hence the  inequality is trivial. The equality  will follows if we prove that 
$$ \max{(\Gamma_u | E_u\not=0 )}=\max{(\gamma_u | u\in C_2)}. $$

By the above Lemma  if $E_u\not=0$ then $\Gamma _u=\gamma _u$. We suppose that $E_u=0$. For any $i=1,\ldots,s$ let $\delta_i$ 
be an integer such that $\reg(M_i)=\End(H^{\delta_i}_{\m_i}(M_i))+\delta_i.$ We will prove the following statement
$$ (*)\ { \rm There\ exists \ }
n\notin \Supp u, { \rm  such\ that  \ }
 \gamma_u\leqslant \gamma_{u+\delta_n\epsilon_n}.$$ If $E_{u+\delta_n\epsilon_n}\not=0$ then 
$\gamma_u\leqslant \gamma_{u+\delta_n\epsilon_n}=\Gamma_{u+\delta_n\epsilon_n}$, otherwise we repeat the argument. 
This process ends  since  
the local cohomology module 
$E_{u+\underset{l\not\in \Supp u}{\sum}\delta_l\epsilon_l}$ is non zero by the Assumption \ref{assumption}. 
Hence there exist some set $J\subset \{1,...,n\}\setminus \Supp u $ such that $E_{u+\sum_{n\in J}\delta_n\epsilon_n}\not=0$ and 
$\gamma_u\leqslant \gamma_{u+\sum_{n\in J}\delta_n\epsilon_n}=\Gamma_{u+\sum_{n\in J}\delta_n\epsilon_n}$
the claim is true.

Now we prove $(*)$: we have 
$E_u=0 \Leftrightarrow \End(\underset{i\in\Supp u}{{\underline{\otimes}}} H^{v_i}_{\m_i}(M_i))<\underset{j\not\in \Supp u}{\max}\sigma_j.$
Let $n\notin \Supp u$ such that $\underset{j\in \Supp u}{\max}\sigma_j=\sigma_n.$ Thus the condition $E_u=0$ is equivalent to
 \hfill \break $\underset{i\in \Supp u}{\min}(\End(H^{u_i}_{\m_i}(M_i)))<\sigma_n$.
 But $\sigma_n\leqslant \reg(M_n)=\End(H^{\delta_n}_{\m_n}(M_n))+\delta_n $ by \cite[Theorem 15.3.1]{bs}. So 
$$ \underset{i\in \Supp u}{\min}(\End(H^{u_i}_{\m_i}(M_i)))\leqslant \End(H^{\delta_n}_{\m_n}(M_n))+\delta_n-1.$$ 
 It implies that 
$$ \gamma_u= 1+\underset{l\in \Supp u}{\sum}(u_l-1)+\underset{i\in \Supp u}{\min} \bigl(\End(H^{u_i}_{\m_i}(M_i))\bigl)
\leqslant \End(H^{\delta_n}_{\m_n}(M_n))+\delta_n+
\underset{l\in \Supp u}{\sum}(u_l-1). $$ 
 On the other hand, since $\depth(M_n)\geqslant 1,$ then  $\delta_n\geqslant 1,$ and  
we have trivially that  $\gamma_u\leqslant \gamma_u+(\delta_n-1)$ which implies  that
$$ \gamma_u\leqslant \min(\gamma_u+\delta_n-1, \End(H^{\delta_n}_{\m_n}(M_n))+\delta_n+\underset{l\in \Supp u}{\sum}(u_l-1))=\gamma_{u+\delta_n\epsilon_n}.$$ 
The last equality follows from  Lemma \ref{l13bis}.
\end{proof} 

In order to apply our results to Segre Veronese transform, we  recall the following Proposition from \cite{md}:
\begin{proposition}If $M$ is a  Cohen-Macaulay module of dimension $d$, then $\reg M[\tau ]^{<n>}= d-\lceil \frac{d -\reg M+\tau }{n}\rceil.$
\end{proposition}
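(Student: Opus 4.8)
The plan is to reduce everything to the single top local cohomology module and to compute its range of nonvanishing degrees under shift and Veronese transform. Since $M$ is Cohen--Macaulay of dimension $d$, the only nonvanishing local cohomology is $H^d_\m(M)$, so by definition $\reg M=\End(H^d_\m(M))+d$; I set $e:=\End(H^d_\m(M))=\reg M-d$. Everything then hinges on knowing that the Veronese functor is compatible with local cohomology, for which I would invoke the Goto--Watanabe theory of \cite{gw}.

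First I would record the exact vanishing pattern of $H^d_\m(M)$. By graded local duality, $(H^d_\m(M))_l$ is $K$-dual to the degree $-l$ component of the canonical module $K_M=\Ext^{n-d}_S(M,S)(-n)$ (with $n=\dim S$), which is a finitely generated Cohen--Macaulay module of dimension $d\geq 1$ and hence of positive depth. Applying the no-gaps principle of Example \ref{e9} to $K_M$, once $(K_M)_j\neq 0$ it remains nonzero for all larger $j$; therefore $(H^d_\m(M))_l\neq 0$ if and only if $l\leq e$, and in particular it is nonzero in infinitely many (consecutive) degrees.

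Next I would pass to the shift and the Veronese transform. With the convention $(M[\tau])_l=M_{l+\tau}$ (so that $\reg M[\tau]=\reg M-\tau$), one has $H^d_\m(M[\tau])=H^d_\m(M)[\tau]$, which is nonzero exactly in degrees $l\leq e-\tau$. Writing $\n$ for the irrelevant maximal ideal of the Veronese subring $S^{<n>}$, the Goto--Watanabe isomorphism gives
$$H^i_\n\bigl(M[\tau]^{<n>}\bigr)\cong \bigl(H^i_\m(M[\tau])\bigr)^{<n>},$$
where the right-hand side is regraded so that its degree $k$ component is the degree $kn$ component of $H^i_\m(M[\tau])$. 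In particular $H^i_\n(M[\tau]^{<n>})=0$ for $i\neq d$, so $M[\tau]^{<n>}$ is again Cohen--Macaulay of dimension $d$, and its top local cohomology is nonzero in Veronese-degree $k$ precisely when $kn\leq e-\tau$. Hence $\End H^d_\n(M[\tau]^{<n>})=\lfloor (e-\tau)/n\rfloor$.

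Assembling these and using $\lfloor -x\rfloor=-\lceil x\rceil$, I obtain
$$\reg M[\tau]^{<n>}=\End H^d_\n\bigl(M[\tau]^{<n>}\bigr)+d=d+\left\lfloor\frac{\reg M-d-\tau}{n}\right\rfloor=d-\left\lceil\frac{d-\reg M+\tau}{n}\right\rceil,$$
which is the asserted formula. The step I expect to be the main obstacle is the second one: pinning down that $H^d_\m(M)$ is nonzero in \emph{every} degree $\leq e$ and zero above, with no gaps. The floor/ceiling bookkeeping in the final identity is valid only because the top local cohomology is nonzero in every degree below its endpoint; this is exactly where the Cohen--Macaulay hypothesis, together with the no-gaps principle applied to $K_M$, is indispensable.
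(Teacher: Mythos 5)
Your argument is correct, but note that the paper itself gives no proof of this Proposition: it is explicitly recalled from \cite{md}, so there is no internal proof to compare against. Your route is nevertheless exactly in the spirit of the paper's own machinery: the key fact you isolate --- that for $M$ Cohen--Macaulay of dimension $d\geq 1$ the top local cohomology $H^d_{\m}(M)$ is nonzero in \emph{every} degree $l\leq \End(H^d_{\m}(M))$ and zero above --- is precisely the second bullet of Lemma \ref{e9bis}, obtained there, as in your write-up, by applying the nonzerodivisor/no-gaps principle of Example \ref{e9} to the canonical module via graded local duality. Combining this with the Goto--Watanabe compatibility $H^i_{\n}(N^{<n>})\cong (H^i_{\m}(N))^{<n>}$ from \cite{gw} and the floor/ceiling identity $\lfloor x\rfloor=-\lceil -x\rceil$ then gives the formula, and your bookkeeping ($\End H^d_{\n}(M[\tau]^{<n>})=\lfloor(\reg M-d-\tau)/n\rfloor$) is right. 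Two minor points worth making explicit: the statement requires $d\geq 1$ (for $d=0$ a finite-length module with gaps, e.g.\ nonzero only in degrees $0$ and $2$, violates the formula), which is consistent with the standing hypothesis $\dim M_i\geq 1$ elsewhere in the paper but is not written into the Proposition; and one should say a word about why $\reg$ of a module over the (non-polynomial) Veronese ring $S^{<n>}$ is still computed by local cohomology, which the paper's opening Remark covers by appeal to \cite{bs}.
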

Hence we have the following consequence:
\begin{theorem} \label{main2} Let $S_1,\ldots, S_s$ be graded polynomial rings on disjoints of set of variables. 
For all $i=1,\ldots,s,$ let $M_i$ be a graded  finitely generated $S_i$-Cohen-Macaulay module with $\dim M_i\geq 1$.  
(Without loss of generality we can assume that at most one module has dimension one). 
 Let $d_i=\dim M_i, b_i=d_i-1\geq 1$, $\alpha_i=d_i-\reg(M_i),$ where $\reg(M_i)$ is the Castelnuovo-Mumford regularity of $M_i.$ Then 

$(1)$ $\reg(M_1{\underline{\otimes}} \ldots {\underline{\otimes}} M_s)=\underset{u\in C_2}{\max} \{ 1+\underset{  l\in \Supp u}{\sum}b_l  -\underset{  l\in \Supp u}{\max}\{\alpha_l\}\}.$

$(2)$ For $n_i\in \fn,$ let $M_i[\tau_i ]^{<n_i>}$ be the shifted $n_i$-Veronese transform of $M_i$, then 
$$\reg (M_1[\tau_1 ]^{<n_1>}{\underline{\otimes}} \ldots {\underline{\otimes}} M_s[\tau_s ]^{<n_s>})=
\underset{u\in C_2}{\max} \{ 1+\underset{  l\in \Supp u}{\sum}b_l  -\underset{  l\in \Supp u}{\max}\{\lceil\frac{\alpha_l+\tau_l}{n_l}\rceil\}\}.
$$
\end{theorem}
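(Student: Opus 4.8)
The plan is to obtain both parts by specializing Theorem \ref{main1}, whose equality holds in this setting because Cohen-Macaulay modules satisfy Assumption \ref{assumption} (as noted right after that assumption). Part $(1)$ is the Cohen-Macaulay specialization of Theorem \ref{main1}, and part $(2)$ is part $(1)$ applied to the Veronese transforms, with the regularity of those transforms supplied by the Proposition; by the Remark at the start of the section, all of this is legitimate over the Veronese subrings $S_i^{<n_i>}$, which are themselves standard graded.

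For part $(1)$, I would first record that a Cohen-Macaulay module $M_i$ of dimension $d_i$ has $H^j_{\m_i}(M_i)=0$ for $j\neq d_i$, so that $\reg(M_i)=\End(H^{d_i}_{\m_i}(M_i))+d_i$ and hence $\End(H^{d_i}_{\m_i}(M_i))=\reg(M_i)-d_i=-\alpha_i$. By Lemma \ref{l13bis}(1), for $u\in C_2$ the nonvanishing of $\tilde E_u$ forces $H^{u_i}_{\m_i}(M_i)\neq 0$ for every $i\in\Supp u$, i.e. $u_i=d_i$; in particular $C_2$ depends only on the dimensions $d_i$. Substituting $u_i=d_i$ (so that $\sum_{l\in\Supp u}(u_l-1)=\sum_{l\in\Supp u}b_l$) and $\End(H^{u_i}_{\m_i}(M_i))=-\alpha_i$ (so that the minimum becomes $-\max_{l\in\Supp u}\alpha_l$) into the equality of Theorem \ref{main1} gives the claimed formula.

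For part $(2)$, write $N_i:=M_i[\tau_i]^{<n_i>}$. The two preliminary facts I need are that $N_i$ is again Cohen-Macaulay and that $\dim N_i=d_i$; these follow from the fact that local cohomology commutes degreewise with the Veronese functor, so that the only surviving cohomology of $N_i$ is again concentrated in cohomological degree $d_i$. Consequently $b_i$ is unchanged and, by the previous paragraph, the set $C_2$ for the family $(N_i)$ equals the one for $(M_i)$. Applying part $(1)$ to the $N_i$ then reduces everything to computing $\alpha_i^{N}:=d_i-\reg(N_i)$. The Proposition gives $\reg(N_i)=d_i-\lceil\frac{d_i-\reg(M_i)+\tau_i}{n_i}\rceil$, and since $d_i-\reg(M_i)=\alpha_i$ this yields $\alpha_i^{N}=\lceil\frac{\alpha_i+\tau_i}{n_i}\rceil$; substituting $\alpha_i^{N}$ for $\alpha_i$ in part $(1)$ produces the asserted identity.

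I expect the only genuinely substantive step, beyond the bookkeeping, to be the justification that $N_i$ stays Cohen-Macaulay of dimension $d_i$, since this is what allows part $(1)$ to be invoked for the transforms and what keeps $C_2$ fixed. The degreewise compatibility of local cohomology with the Veronese functor is the clean way to see it, and it is also consistent with, indeed underlies, the regularity formula of the Proposition; pinning down this compatibility together with the standard-graded structure of $S_i^{<n_i>}$ (needed to apply Theorem \ref{main1} via the Remark) is the point I would check most carefully.
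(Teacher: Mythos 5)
Your proposal is correct and follows exactly the route the paper intends: the paper states Theorem \ref{main2} as an immediate consequence of Theorem \ref{main1} (whose equality applies since Cohen-Macaulay modules satisfy Assumption \ref{assumption}) together with the quoted Proposition on $\reg M[\tau]^{<n>}$, and your substitutions $u_i=d_i$, $\End(H^{d_i}_{\m_i}(M_i))=-\alpha_i$, and $\alpha_i^N=\lceil(\alpha_i+\tau_i)/n_i\rceil$ are precisely the omitted bookkeeping. Your additional care about the Veronese transform remaining Cohen-Macaulay of the same dimension (via the degreewise compatibility of local cohomology with the Veronese functor, which is in Goto--Watanabe) and about $C_2$ being unchanged fills in details the paper leaves implicit.
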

As a Corollary we generalize one of the  main results  of \cite{cm}[Theorem 1.4]
\begin{corollary} \label{main-cm} (\cite{cm}[Theorem 1.4])For   $i=1,\ldots,s,$ let  $S_i$ be  graded polynomial rings on disjoints sets of variables,
 with $\dim S_i\geq 1. $
(Without loss of generality we can assume that at most one ring has dimension one). 
 let $m_i,n_i\in \fz$, and  $S_i[m_i]^{<n_i>}$ be the $n_i$-Veronese transform of $S_i[m_i],$  
then 
$$\reg (S_1[m_1]^{<n_1>}{\underline{\otimes}} \ldots {\underline{\otimes}} S_s[m_s]^{<n_s>})=\underset{u\in C_2}{\max} \{ 1+\underset{  l\in \Supp u}{\sum}b_l 
 -\underset{  l\in \Supp u}{\max}\{\lceil\frac{b_l+m_l+1}{n_l}\rceil\}\}.
$$
\end{corollary}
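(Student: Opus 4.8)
The plan is to obtain this corollary as the special case of Theorem \ref{main2}(2) in which every module $M_i$ is taken to be the polynomial ring $S_i$ itself, with the shift parameter set to $\tau_i=m_i$. First I would verify that the hypotheses of Theorem \ref{main2} are satisfied: each $S_i$ is a standard graded polynomial ring, hence Cohen--Macaulay, with $\dim S_i=d_i\geq 1$, and by assumption at most one of them has dimension one. Under the identification $M_i=S_i$, $\tau_i=m_i$, the object $S_i[m_i]^{<n_i>}$ is exactly the shifted $n_i$-Veronese transform $M_i[\tau_i]^{<n_i>}$ appearing in Theorem \ref{main2}(2), so that result applies verbatim to the left-hand side.

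The only quantity that needs to be computed is $\reg(S_i)$. For the polynomial ring $S_i$ of dimension $d_i$, the unique nonvanishing local cohomology module is $H^{d_i}_{\m_i}(S_i)$, whose top nonzero graded piece sits in degree $-d_i$ (its socle is spanned by the inverse monomial of degree $-d_i$), so that $\End(H^{d_i}_{\m_i}(S_i))=-d_i$. Hence $r_{d_i}(S_i)=\End(H^{d_i}_{\m_i}(S_i))+d_i=0$ while all lower local cohomology vanishes, giving $\reg(S_i)=0$. Consequently $\alpha_i=d_i-\reg(S_i)=d_i=b_i+1$.

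Substituting $\alpha_l=b_l+1$ and $\tau_l=m_l$ into the formula of Theorem \ref{main2}(2), the inner ceiling becomes
$$ \lceil \tfrac{\alpha_l+\tau_l}{n_l}\rceil=\lceil \tfrac{b_l+1+m_l}{n_l}\rceil=\lceil \tfrac{b_l+m_l+1}{n_l}\rceil, $$
while the terms $1+\sum_{l\in \Supp u}b_l$ and the index set $C_2$ remain unchanged, since the Veronese transform and the shift preserve dimension and Cohen--Macaulayness, so that $C_2$ depends only on the $d_i$, which are the same as for the $S_i$. This is precisely the asserted equality. There is essentially no obstacle in this argument: it is a direct substitution once $\reg(S_i)=0$ is recorded, and the single point demanding care is reconciling the two shift conventions ($m_i$ in the corollary against $\tau_i$ in Theorem \ref{main2}) and confirming that $C_2$ is computed from the same data in both statements.
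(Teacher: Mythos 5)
Your proposal is correct and is exactly the derivation the paper intends: the corollary is stated as an immediate specialization of Theorem \ref{main2}(2) with $M_i=S_i$ and $\tau_i=m_i$, and the only computation needed is $\reg(S_i)=0$, hence $\alpha_i=d_i=b_i+1$, which you carry out correctly. The paper gives no separate proof, so there is nothing further to reconcile.
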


\subsection{Local cohomology Modules without gaps}
Let $M$ be a finitely generated graded $S$-module. We recall the local duality's theorem (see \cite{s}) :\hfill\break
We have an isomorphism : $$H^i_{\m}(M) \simeq \Hom_{S}(\Ext^{ n-i }_{S}(M,S), E(S/\m)).$$
We denote by $D^i (M)$ the finitely generated graded $S$-module $\Ext^{ n-i }_{S}(M,S)$. 
The following Lemma follows immediately from the local duality's theorem and the Example \ref{e9}.
\begin{lemma} \label{e9bis} \begin{itemize}\item If $\depth (D^i (M))\geq 1$ then $H^i_{\m}(M)$ has no gaps and 
and for all $l\leq  \End(H^i_{\m}(M)) $, we have   $(H^i_{\m}(M))_l\not=0$.
\item Let $M$ be a finitely generated  graded $S$-module with $\depth M\geqslant 1$ and $\dim M=d$. It is known that $\depth D^d(M)\geq \min \{ d,2\}$. Hence 
 the top local cohomology $H^d_{\m}(M)$ has no gaps 
and for all $l\leq  \End(H^d_{\m}(M)) $, we have   $(H^d_{\m}(M))_l\not=0$.
\item It follows from \cite{m-c} that if $A$ is a standard graded simplicial toric ring of dimension $d$, and $\depth A= d-1$, then $D^{d-1} (A)$ 
is a Cohen-Macaulay Module of dimension $d-2$, so if $d\geq 3$, the module $H^{d-1}_{\m}(A)$ has no gaps, and 
for all $l\leq  \End(H^{d-1}_{\m}(M))  $, we have   $(H^{d-1}_{\m}(M))_l\not=0$.
\end{itemize}\end{lemma}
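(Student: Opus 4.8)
The plan is to handle the three bullets in order, observing that the second and third are merely applications of the first to a specific cohomological degree, once the relevant depth of $D^d$, respectively $D^{d-1}$, is known. Hence the substance lies entirely in the first bullet, and I would organize the argument around graded local duality together with Example~\ref{e9}.

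For the first bullet I would begin from the graded form of local duality recalled just above, which identifies $H^i_{\m}(M)$ with the graded Matlis dual $\Hom_S(D^i(M),E(S/\m))$ of the finitely generated module $D^i(M)=\Ext^{n-i}_S(M,S)$. The structural fact I would extract is that graded Matlis duality reverses degrees: in each graded piece $(H^i_{\m}(M))_l\cong\Hom_K((D^i(M))_{-l},K)$, so that $(H^i_{\m}(M))_l\neq 0$ if and only if $(D^i(M))_{-l}\neq 0$. Writing $\sigma$ for the least degree in which $D^i(M)$ is nonzero, this identification gives $\End(H^i_{\m}(M))=-\sigma$. Now, under the hypothesis $\depth D^i(M)\geq 1$, I would apply Example~\ref{e9} to the finitely generated graded module $N=D^i(M)$: it produces a degree-one nonzerodivisor on $N$, whose multiplication injects $N_l$ into $N_{l+1}$, so $N$ has no gaps and $N_l\neq 0$ for every $l\geq\sigma$. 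Transporting this through the degree-reversing duality of the previous sentence shows that $H^i_{\m}(M)$ has no gaps and that $(H^i_{\m}(M))_l\neq 0$ for all $l\leq -\sigma=\End(H^i_{\m}(M))$, which is exactly the claim.

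For the second bullet I would invoke the standard fact that $D^d(M)$, which up to a shift is the canonical module of $M$, satisfies Serre's condition $S_2$, hence $\depth D^d(M)\geq\min\{2,\dim D^d(M)\}=\min\{2,d\}$; since $\depth M\geq 1$ forces $d\geq 1$, this lower bound is at least $1$, and the first bullet applies with $i=d$. For the third bullet I would quote \cite{m-c}, according to which for a standard graded simplicial toric ring $A$ of dimension $d$ with $\depth A=d-1$ the module $D^{d-1}(A)$ is Cohen--Macaulay of dimension $d-2$; when $d\geq 3$ this yields $\depth D^{d-1}(A)=d-2\geq 1$, so the first bullet again applies, now with $i=d-1$.

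I expect no serious obstacle. The only point that demands genuine care is the bookkeeping of graded degrees under Matlis duality---the sign reversal and the resulting identity $\End(H^i_{\m}(M))=-\sigma$---since everything else is a direct appeal to Example~\ref{e9} and to the known depth properties of $D^d$ and $D^{d-1}$.
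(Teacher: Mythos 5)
Your proposal is correct and follows exactly the route the paper intends: the paper offers no written proof beyond the remark that the lemma ``follows immediately from the local duality's theorem and the Example \ref{e9}'', and your argument---degree-reversing graded Matlis duality applied to $D^i(M)$, Example \ref{e9} giving the no-gap property of $D^i(M)$ from $\depth D^i(M)\geq 1$, and the known depth bounds for $D^d$ and $D^{d-1}$ for the last two bullets---is precisely the intended expansion of that remark.
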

\section{Square free monomial ideals}

Let $\Delta $ be a simplicial complex with support on $n$ vertices, labeled by the set $[n]=\{1,...,n\}$, $S:=K[x_1,...,x_n]$ be a polynomial ring,
 $I_\Delta \subset S$  be the Stanley Reisner ideal associated to $\Delta $, that is $I_\Delta=(x^F/ F\not\in \Delta )$. The Alexander dual of $\Delta$
 is the simplicial complex 
$\Delta^*$ defined by: $F\subset [n]$ is a face of  $\Delta^*$ if and only if $[n]\setminus F\notin \Delta $.
 The following theorem is a well known consequence of Hochster's Theorems (see for example \cite{sb}[Proposition 3.8]):
\begin{proposition} Let $K[\Delta]:= S/I_\Delta $ be the Stanley-Reisner ring associated to $\Delta $, $a=(a_1,...,a_n)\in \fz$, where for all $i$, $a_i\leq 0$.
 Let $F=\Supp(a)\subset [n]$  and $\mid F\mid:= \card(F)$ then:
 $$\dim_K (H^i_\m(K[\Delta]))_a=\beta _{i+1-\mid F\mid,[n]\setminus F }(K[\Delta^*]).$$ 
\end{proposition}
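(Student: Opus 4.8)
The plan is to derive the stated equality by combining the two graded forms of Hochster's formula---one for local cohomology, one for graded Betti numbers---with combinatorial Alexander duality. First I would invoke Hochster's formula for the local cohomology of a Stanley--Reisner ring: for $a\in\fz^n$ with all $a_i\leq 0$ and $F=\Supp(a)$ one has $\dim_K (H^i_\m(K[\Delta]))_a=\dim_K\widetilde H^{\,i-\mid F\mid-1}(\mathrm{link}_\Delta F;K)$, where $\mathrm{link}_\Delta F=\{G\subseteq[n]\setminus F\mid G\cup F\in\Delta\}$ and $\widetilde H$ denotes reduced simplicial cohomology with coefficients in $K$. (Since $a_i\leq 0$ for all $i$, the support $\{i\mid a_i\neq 0\}$ agrees with $\{i\mid a_i<0\}$, so this is well defined.) This step already shows that the left-hand side depends on $a$ only through its support $F$, and replaces it by a purely topological quantity attached to the link.

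The key combinatorial step is to identify this link, via Alexander duality, with a restriction of $\Delta^{*}$. Writing $W=[n]\setminus F$ and $(\Delta^{*})_W=\{G\in\Delta^{*}\mid G\subseteq W\}$ for the full subcomplex on $W$, I would verify the set-theoretic identity that $\mathrm{link}_\Delta F$ and $(\Delta^{*})_W$ are Alexander duals of one another on the vertex set $W$. Concretely, for $G\subseteq W$ one has $[n]\setminus G=F\cup(W\setminus G)$, so $G\in(\Delta^{*})_W$ iff $F\cup(W\setminus G)\notin\Delta$ iff $W\setminus G\notin\mathrm{link}_\Delta F$, which is exactly the defining condition for $G$ to lie in the Alexander dual $(\mathrm{link}_\Delta F)^{*}$ taken relative to $W$. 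Hence $(\mathrm{link}_\Delta F)^{*}=(\Delta^{*})_W$.

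Finally I would feed this into combinatorial Alexander duality on the $m=n-\mid F\mid$ vertices of $W$, in the form $\widetilde H^{\,j}(\Gamma;K)\cong\widetilde H_{\,m-3-j}(\Gamma^{*};K)$, applied to $\Gamma=\mathrm{link}_\Delta F$ with $j=i-\mid F\mid-1$. The degree arithmetic gives $m-3-j=(n-\mid F\mid)-3-(i-\mid F\mid-1)=n-i-2$, so that $\dim_K\widetilde H^{\,i-\mid F\mid-1}(\mathrm{link}_\Delta F;K)=\dim_K\widetilde H_{\,n-i-2}((\Delta^{*})_W;K)$, and over the field $K$ this equals $\dim_K\widetilde H^{\,n-i-2}((\Delta^{*})_W;K)$ by universal coefficients. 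On the other hand, Hochster's formula for graded Betti numbers reads $\beta_{p,W}(K[\Delta^{*}])=\dim_K\widetilde H^{\,\mid W\mid-p-1}((\Delta^{*})_W;K)$; taking $p=i+1-\mid F\mid$ and $\mid W\mid=n-\mid F\mid$, the cohomological degree is $(n-\mid F\mid)-(i+1-\mid F\mid)-1=n-i-2$, which matches exactly. Chaining the three identities yields the claimed equality $\dim_K (H^i_\m(K[\Delta]))_a=\beta_{i+1-\mid F\mid,\,[n]\setminus F}(K[\Delta^{*}])$.

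The main obstacle is not conceptual but bookkeeping: one must keep the several homological and cohomological degree shifts mutually consistent, and separately handle the degenerate cases where $\mathrm{link}_\Delta F$ (equivalently $(\Delta^{*})_W$) is the void complex or the full simplex on $W$. In those boundary cases combinatorial Alexander duality must be applied in its reduced convention, and the corresponding local cohomology graded piece checked by hand to vanish or to be one-dimensional so that both sides still agree. Once these conventions are pinned down, the displayed index computations close the argument.
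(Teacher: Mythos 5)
Your derivation is correct: the three ingredients you chain together (Hochster's local cohomology formula for the link, the identification $(\mathrm{link}_\Delta F)^{*}=(\Delta^{*})_{[n]\setminus F}$, and combinatorial Alexander duality matched against Hochster's Betti number formula) all check out, and the degree arithmetic on both sides lands on $n-i-2$ as required. The paper itself offers no proof of this Proposition, merely citing it as a well-known consequence of Hochster's theorems (via Sbarra's thesis), so your argument is exactly the standard route the authors are implicitly relying on, with the details filled in.
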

We get the following Corollary: 
\begin{corollary} \label{sbarraloccoh} \cite{sb}[Lemma 3.9]
\begin{enumerate}
\item $$\dim_K (H^i_\m(K[\Delta]))_0=\beta _{i+1,n}(K[\Delta^*]),$$ 
\item For any integer $j> 0$:
$$\dim_K (H^i_\m(K[\Delta]))_{-j}=\sum_{h=1} ^{\min(j,n)} {{n}\choose{h }} {{h+j -1}\choose{j }}\beta _{i+1-h,n-h }(K[\Delta^*]).$$ 
\end{enumerate}
\end{corollary}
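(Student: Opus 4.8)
The plan is to obtain both formulas from the Proposition by passing from the fine $\fz^n$-grading of $H^i_\m(K[\Delta])$ to the coarse $\fz$-grading. The $\fz$-graded strand in internal degree $d$ decomposes as
\[
(H^i_\m(K[\Delta]))_d=\bigoplus_{\substack{a\in\fz^n\\ a_1+\cdots+a_n=d}}(H^i_\m(K[\Delta]))_a ,
\]
and, by the standard vanishing coming from Hochster's formula, for a Stanley--Reisner ring the summand $(H^i_\m(K[\Delta]))_a$ is nonzero only when $a_i\leq 0$ for every $i$. Thus only the finitely many $a$ with all $a_i\leq 0$ and $a_1+\cdots+a_n=d$ contribute, and to each such $a$ the Proposition attaches the dimension $\beta_{i+1-|F|,\,[n]\setminus F}(K[\Delta^*])$ with $F=\Supp(a)$. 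The essential feature I would exploit is that this dimension depends on $a$ \emph{only} through its support $F$.

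For part $(1)$ I would take $d=0$. The only vector $a$ with $a_i\leq 0$ for all $i$ and $a_1+\cdots+a_n=0$ is $a=0$, whose support is empty, so the Proposition gives $\dim_K(H^i_\m(K[\Delta]))_0=\beta_{i+1,\,[n]}(K[\Delta^*])$. It then remains to pass from the fine Betti number $\beta_{i+1,[n]}$ to the coarse one $\beta_{i+1,n}$. This is immediate: $K[\Delta^*]$ is a squarefree module, so its Betti numbers are concentrated in squarefree multidegrees, and the only subset of $[n]$ of cardinality $n$ is $[n]$ itself, whence $\beta_{i+1,n}(K[\Delta^*])=\beta_{i+1,[n]}(K[\Delta^*])$.

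For part $(2)$ I would fix $j>0$, set $d=-j$, and organize the resulting sum over contributing $a$ according to $h=|\Supp(a)|$. For a fixed support set $F$ with $|F|=h$, every contributing $a$ satisfies $a_l\leq -1$ for $l\in F$ and $a_l=0$ otherwise, so enumerating the lattice points with $\sum_{l\in F}a_l=-j$ is a purely combinatorial count; and summing the common value $\beta_{i+1-h,\,[n]\setminus F}(K[\Delta^*])$ over the $(n-h)$-subsets $[n]\setminus F$ recovers the coarse total Betti number $\beta_{i+1-h,\,n-h}(K[\Delta^*])$, once again by the squarefree-support property. Reassembling over $h=1,\dots,\min(j,n)$ gives the stated formula, where the range of $h$ is forced by $h\geq 1$ (since $j>0$ makes the support nonempty) together with $h\leq n$ and $h\leq j$. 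The only genuine work here is the combinatorial bookkeeping: one must match the exact multiplicity with which each $\beta_{i+1-h,n-h}(K[\Delta^*])$ enters to the prescribed binomial coefficient, and keep careful track of whether it is the fine (per subset) or the coarse (summed over subsets of a fixed size) Betti number that appears. That counting step — rather than any homological input — is where I expect the main obstacle to lie.
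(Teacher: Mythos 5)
Your reduction to the Proposition is the right (and essentially the only available) route, and part $(1)$ is handled correctly: the only admissible multidegree summing to $0$ is $a=0$, with empty support, and $\beta_{i+1,[n]}=\beta_{i+1,n}$ because the resolution of a Stanley--Reisner ring lives in squarefree multidegrees. The problem is that for part $(2)$ you have deferred exactly the step on which the proof stands or falls --- ``the combinatorial bookkeeping'' --- and when one actually performs it, your method does not produce the stated coefficients. For a fixed support $F$ with $|F|=h$, the vectors $a$ with $a_l\leq -1$ for $l\in F$, $a_l=0$ otherwise, and $\sum_l a_l=-j$ are in bijection with compositions of $j$ into $h$ positive parts, of which there are $\binom{j-1}{h-1}$, not $\binom{h+j-1}{j}$ (the latter counts vectors whose support is merely \emph{contained} in $F$). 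Moreover, summing the value $\beta_{i+1-h,[n]\setminus F}(K[\Delta^*])$ over the $\binom{n}{h}$ choices of $F$ yields the coarse Betti number $\beta_{i+1-h,n-h}(K[\Delta^*])$ exactly once, so no additional factor $\binom{n}{h}$ can appear. Your argument therefore terminates at
$$\dim_K (H^i_\m(K[\Delta]))_{-j}=\sum_{h=1}^{\min(j,n)}\binom{j-1}{h-1}\,\beta_{i+1-h,n-h}(K[\Delta^*]),$$
which differs from the formula you are asked to prove already for $h=2$, $j=2$.

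This is not a cosmetic slip that more care would repair: there is no mechanism in the support-stratified count to generate $\binom{n}{h}\binom{h+j-1}{j}$, and switching to ``support contained in $F$'' destroys the identification of the common value $\beta_{i+1-h,[n]\setminus F}$, since the Proposition attaches to $a$ the Betti number indexed by the complement of its \emph{exact} support. A sanity check on $K[\Delta]=K[x,y]/(xy)$ (so $K[\Delta^*]=K$, $\beta_{1,1}=2$) gives $\dim_K(H^1_\m)_{-1}=2$, which matches $\binom{0}{0}\beta_{1,1}=2$ but not $\binom{2}{1}\binom{1}{1}\beta_{1,1}=4$. So either you must reconcile the conventions of the cited source (Betti numbers of the ideal versus the quotient, or a differently normalized Alexander dual) before the statement can be derived, or the derivation you propose simply does not reach the displayed formula. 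As written, the proposal does not prove the statement, and the point of failure is precisely the step you flagged but did not carry out.
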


 \begin{corollary}\label{sbarraloccoh1} Let $i$ be an integer such that $(H^i_{\m}(K[\Delta] ))\not=0$, define $k_i$ as the  smallest $0\leq h\leq n$ such that $\beta _{i+1-h,n-h }(K[\Delta^*])\not= 0$. 
 If $k_i\geq 1$  then $H^i_{\m}(K[\Delta] )$ has no gaps and $(H^i_{\m}(K[\Delta] ))_k\not=0$ for all  $k\leq  {\rm end}(H^i_{\m}(K[\Delta] ))$.
 \end{corollary}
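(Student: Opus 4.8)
The plan is to read everything off the two Hilbert-function formulas in Corollary \ref{sbarraloccoh}, which express $\dim_K(H^i_\m(K[\Delta]))_{-j}$ in terms of the graded Betti numbers $\beta_{i+1-h,n-h}(K[\Delta^*])$ of the Alexander dual. The key structural observation is that the local cohomology lives only in nonpositive degrees, so $H^i_\m(K[\Delta])_{\beta}=0$ for $\beta>0$ and $\mathrm{end}(H^i_\m(K[\Delta]))\le 0$; thus having "no gaps" and the claim $(H^i_\m(K[\Delta]))_k\ne 0$ for all $k\le \mathrm{end}(H^i_\m(K[\Delta]))$ amounts to showing that once a graded piece in degree $-j$ is nonzero, every deeper piece in degree $-j'$ with $j'\ge j$ is nonzero as well, and that the nonzero degrees form a tail going to $-\infty$.

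First I would fix $i$ with $H^i_\m(K[\Delta])\ne 0$ and recall that $k_i$ is the smallest $0\le h\le n$ with $\beta_{i+1-h,\,n-h}(K[\Delta^*])\ne 0$; the hypothesis is $k_i\ge 1$. Setting $h=k_i$ in the sum of part (2) of Corollary \ref{sbarraloccoh}, I would isolate this single term: for every $j\ge 1$ (more precisely every $j\ge k_i$, but for $j<k_i$ the binomial-coefficient bookkeeping still behaves) the coefficient
$$\binom{n}{k_i}\binom{k_i+j-1}{j}\beta_{i+1-k_i,\,n-k_i}(K[\Delta^*])$$
is strictly positive, because $k_i\ge 1$ forces $\binom{n}{k_i}>0$, the binomial $\binom{k_i+j-1}{j}$ is a positive integer for all $j\ge 1$ once $k_i\ge 1$, and $\beta_{i+1-k_i,n-k_i}\ne 0$ by definition of $k_i$. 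Since all the other summands are sums of products of binomial coefficients with nonnegative Betti numbers, hence nonnegative, no cancellation can occur and the whole sum is strictly positive. This shows $(H^i_\m(K[\Delta]))_{-j}\ne 0$ for all $j\ge 1$ (or at least for all $j\ge k_i$), which immediately gives both the no-gaps property and the statement that all graded pieces below the top degree are nonzero.

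The main point to get right is the indexing and the constraint $h\le\min(j,n)$ in the summation. The term with $h=k_i$ only appears once $j\ge k_i$, so for small $j$ (namely $0\le j<k_i$) I must argue separately; here the cleanest route is to let $\mathrm{end}(H^i_\m(K[\Delta]))$ be the largest degree $-j_0$ in which the module is nonzero and note that the positivity computation above shows $(H^i_\m)_{-j}\ne 0$ for every $j\ge k_i$, so the end degree is $\ge -k_i$ and the tail from $-k_i$ downward is entirely nonzero; any nonzero piece in degrees $-1,\dots,-(k_i-1)$ then lies between two nonzero pieces, ruling out a gap. The genuinely delicate step is therefore ensuring the $h=k_i$ term is actually present and positive in the relevant range, which is exactly where the hypothesis $k_i\ge 1$ is used: it guarantees $\binom{n}{k_i}\ne 0$ and that the binomial factor $\binom{k_i+j-1}{j}$ never vanishes. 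I expect no serious analytic obstacle beyond this careful combinatorial bookkeeping, since once one isolated summand is forced positive and all others are nonnegative the conclusion is immediate.
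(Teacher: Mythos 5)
Your central computation is the same one the paper's (very terse) proof relies on: for $j\ge k_i$ the term $h=k_i$ occurs in the sum of Corollary \ref{sbarraloccoh}(2), its coefficient $\binom{n}{k_i}\binom{k_i+j-1}{j}$ is a positive integer because $k_i\ge 1$, the Betti number $\beta_{i+1-k_i,n-k_i}(K[\Delta^*])$ is nonzero by the choice of $k_i$, and all other summands are nonnegative, so $(H^i_\m(K[\Delta]))_{-j}\ne 0$ for every $j\ge k_i$. That part is correct and is exactly the intended argument.

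The flaw is in your treatment of the degrees $0,-1,\dots,-(k_i-1)$. You write that ``any nonzero piece in degrees $-1,\dots,-(k_i-1)$ then lies between two nonzero pieces, ruling out a gap,'' but this is backwards: a gap is a \emph{zero} piece strictly between two nonzero pieces, so a hypothetical nonzero piece at degree $-1$ together with a zero piece at degree $-2$ (and the nonzero tail from $-k_i$ on) would be precisely a gap, and your argument does not exclude it. Worse, in that scenario one would have $\mathrm{end}(H^i_\m(K[\Delta]))=-1$, and the second assertion of the corollary, that $(H^i_\m(K[\Delta]))_k\ne 0$ for all $k\le \mathrm{end}(H^i_\m(K[\Delta]))$, would fail at $k=-2$; you only establish $\mathrm{end}(H^i_\m(K[\Delta]))\ge -k_i$, which is not enough. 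The missing (easy) step is to use the \emph{minimality} of $k_i$ in the other direction: for $j=0$, Corollary \ref{sbarraloccoh}(1) gives $\dim_K(H^i_\m(K[\Delta]))_0=\beta_{i+1,n}(K[\Delta^*])=0$ since $k_i\ge 1$, and for $1\le j<k_i$ the sum in part (2) runs only over $h\le \min(j,n)=j<k_i$, where every Betti number $\beta_{i+1-h,n-h}(K[\Delta^*])$ vanishes by the definition of $k_i$; hence $(H^i_\m(K[\Delta]))_{-j}=0$ for all $0\le j<k_i$. Since the module is concentrated in nonpositive degrees by Hochster's formula, this pins down $\mathrm{end}(H^i_\m(K[\Delta]))=-k_i$ exactly, and combined with your positivity computation both conclusions follow.
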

  
 \begin{proof}   The formula proved in \ref{sbarraloccoh}
 implies that $(H^i_{\m}(K[\Delta] ))_k\not=0$ for all  $k\leq  k_i$. The claim is over.
 \end{proof}
Let remark that $\depth (K[\Delta])\geq 1$ always, so there is a big class of Stanley-Reisner rings that satisfies the Assumption \ref{assumption}.
 In any case if we have $s$ Stanley-Reisner rings, the Castelnuovo-Mumford regularity of their Segre product, can be read off from their Betti's tables, 
by the Corollary \ref{c12}.

\end{document}